\newcommand{\be}{\begin{equation}}
\newcommand{\ee}{\end{equation}}
\newcommand{\beq}{\begin{eqnarray}}
\newcommand{\eeq}{\end{eqnarray}}
\def\R{{\mathfrak R}}
\def\n{\nabla}
\newtheorem{prop}{Proposition}[section]
\newtheorem{thm}[prop]{Theorem}
\newtheorem{lem}[prop]{Lemma}
\newtheorem{rema}[prop]{Remark}
\newtheorem{defi}[prop]{Definition}
\def\begeq{\begin{equation}}
\def\endeq{\end{equation}}
\def\p{\partial}
\def\e{\epsilon}
\def\R{\mathbb R}
\def\d{\delta}
\def\a{\alpha}
\def\b{\beta}
\def\om{{\omega}}
\def \ds{\displaystyle}
\def \vs{\vspace*{0.1cm}}
\def\div{{\rm div\,}}
\def\e{{\epsilon}}
\def\odot{\setbox0=\hbox{$\bigcirc$}\relax \mathbin {\hbox
to0pt{\raise.5pt\hbox to\wd0{\hfil $\wedge$\hfil}\hss}\box0 }}
\numberwithin{equation} {section}
\def\tilde{\widetilde}
\def\Ric{{\rm Ric\,}}
\begin{document}

\title{Chern's magic form and the Gauss-Bonnet-Chern mass}

\author{Guofang Wang}
\address{ Albert-Ludwigs-Universit\"at Freiburg,
Mathematisches Institut,
Eckerstr. 1
D-79104 Freiburg, Germany}
\email{guofang.wang@math.uni-freiburg.de}

\author{Jie Wu}
\address{Department of Mathematics, Zhejiang University, Hangzhou 310027, P. R. China}
\email{wujiewj@zju.edu.cn}

\begin{abstract} In this note, we use Chern's magic  form $\Phi_k$ in his famous proof of the Gauss-Bonnet theorem
to define a  mass for asymptotically flat manifolds. It turns out that the new defined mass is  equivalent to  the one that we introduced recently by using
the Gauss-Bonnet-Chern curvature $L_k$.
Moreover, this equivalence implies a simple proof of the equivalence between the ADM mass and the intrinsically defined mass via the Ricci tensor,
which was reconsidered by Miao-Tam \cite{MT} and Herzlich \cite{H} very recently.
\end{abstract}

\thanks{J. Wu is supported by NSF of China under Grant No. 11401553.}

\subjclass[2000]{53C21, (83C05, 83C30)}

\keywords{ADM mass, Chern's magic form, Gauss-Bonnet-Chern mass,  Gauss-Bonnet-Chern curvature}

\maketitle


\

\section{Introduction}
The ADM mass plays an important role in the Einstein gravity. It is a conserved quantity for asymptotically flat manifolds.  A complete  manifold $(M^n,g)$ is said to be
asymptotically flat (AF) of decay order $\tau$ (with one end) if there exists a compact set $K$ such that
$M\setminus K$ is diffeomorphic to  $\mathbb{R}^n\setminus B_R(0)$ for some $R>0$ and in the standard coordinates of $\mathbb{R}^n$, the metric $g$ has the following expansion
 $$g_{ij}=\delta_{ij}+\sigma_{ij},$$
 with $$|\sigma_{ij}|+ r|\partial\sigma_{ij}|+ r^2|\partial^2\sigma_{ij}|=O(r^{-\tau}),$$
 where $r$ and $\partial$ denote the Euclidean distance and the standard derivative operator on $\mathbb{R}^n$ with the standard metric $\delta$, respectively. If the scalar
 curvature $R$ is integrable on $(M^n,g)$ and $\tau>\frac{n-2}{2}$, the ADM mass \cite{ADM} is defined by
\begin{equation}\label{m1}
m_{ADM}:=\frac 1{2(n-1)\omega_{n-1}}\lim_{r\to\infty}\int_{S_r}(g_{ij,i}-g_{ii,j})\nu_j^{\delta} d\sigma^{\delta},
\end{equation}
where $\omega_{n-1}$ is the volume of $(n-1)$-dimensional standard unit sphere and  $S_r$ is the Euclidean coordinate sphere, $d\sigma^{\delta}$ is the volume element on $S_r$ induced by
the Euclidean metric, $\nu^{\delta}$ is the outward unit normal vector to $S_r$ in $\mathbb{R}^n$ and $g_{ij,k}= \partial_kg_{ij}$
are  the ordinary partial derivatives. The well-definedness and invariance of $m_{ADM}$ was proved by Bartnik \cite{Bar} (see also the work by Chru\'sciel \cite{Ch1}).
It is known that there is alternative formulation of the ADM mass via the Einstein tensor and a radial direction Euclidean conformal Killing field  (see  Ashtekar-Hasen \cite{AH} and
Chru\'sciel \cite{Ch2}).  Precisely,
\begin{equation}\label{mI}
m_I=-\frac{1}{(n-1)(n-2)\omega_{n-1}}\lim_{r\to\infty}\int_{S_r}(Ric-\frac 12 Rg)(X,\nu)d\sigma^g,
\end{equation}
where $X$ denotes the Euclidean conformal Killing vector field $r\partial r$, $d\sigma^g$ is the volume element on $S_r$ induced by the metric
$g$ and $\nu$ is the outward unit normal vector to $S_r$ in $(M^n, g)$. The equivalence of the two masses was proved by applying a density theorem to reduce the general case to the
harmonic asymptotics case (see the work of Huang \cite{Huang} for instance). Very recently, there appeared new proofs of the equivalence by Miao-Tam \cite{MT} by calculating directly
in coordinates and by Herzlich \cite{H} in a coordinate-free way. One of the aims of this note is to give another, simpler proof of this equivalence.

As a generalization of the ADM mass, recently we introduced a higher order mass by using
the Gauss-Bonnet-Chern curvature $L_k$, which is a natural generalization of the scalar
curvature, as follows:
\begin{defi}\label{GBC defi}
$~$\cite{GWW}
 Let $n>2k$. Suppose that $(M^n,g)$ is an asymptotically flat of decay order
 $\tau>\frac{n-2k}{k+1}$ and the Gauss-Bonnet curvature  $L_k$ is integrable on $(M^n,g)$. The
 Gauss-Bonnet-Chern mass (or GBC mass) given by
 \begin{equation}\label{GBC0}
m_k=m_{GBC}:=\frac{(n-2k)!} {2^{k-1}(n-1)!\,\omega_{n-1}}
\lim_{r\to\infty}\int_{S_r}P_{(k)}^{ijls}\partial_s g_{jl} \nu_{i}^{\delta}d\sigma^{\delta},
\end{equation}
exists and does not depend on the choice of coordinates.
\end{defi}
For the definition of the Gauss-Bonnet-Chern curvature $L_k$ and
the 4-tensor $P_{(k)}$, see Section 2 below.

Analog to the work of Ashtekar-Hasen and Chru\'sciel, we can also introduce a higher order intrinsically defined mass,
by using a generalized Einstein tensor ${\mathcal E}^{(k)}$, which we will call it the Lovelock curvature, since Lovelock gave a characterization of such tensors in \cite{Lo}.  For
its definition see \eqref{2} in Section 2 below.

\begin{defi}Let $X$ be the radial direction Euclidean conformal Killing vector field $r\frac{\p}{\p r}$, we define
\begin{equation}\label{Ric_m_k}m^k_{I}:=-\frac{(n-2k-1)!}{2^{k-1}(n-1)!\,\om_{n-1}}\lim_{r\to \infty} \int_{S_r}  {\mathcal E}^{(k)}(X , \nu)d\sigma^g,
\end{equation}
whenever this limit is convergent.
\end{defi}
This is the same as $m_I$ when $k=1$, since ${\mathcal E}^{(1)}$ is exactly the ordinary Einstein tensor $Ric-\frac 12 R g$.
In this note, we would like to see if
it is equivalent to the GBC mass $m_{GBC}$ for general $1\leq k<\frac n2$.

On the other hand,  the Gauss-Bonnet-Chern curvature $L_k$ looks like the magic form $\Phi_k$, which was introduced by Chern \cite{Chern1} in his famous intrinsic proof of the
Gauss-Bonnet theorem.
Its definition is recalled in \eqref{eq3.1} below.  Therefore it is natural to ask if one can use $\Phi_k$ alternatively to define a mass.

By a slight modification, we show that one can really do it.
\begin{defi}
Under the conditions in Definition \ref{GBC defi}, we define a new mass
\begin{equation}\label{eq_chern_mass}
m^C_k=\frac{1}{2^k(n-1)!\,\om_{n-1}}\lim_{r\to \infty} \int_{S_r} r^{n-2k}\nu^*(\Phi_k),
\end{equation}
where the $(n-1)$ form $\Phi_k$ is defined in \eqref{eq3.1} below and the outer unit  normal $\nu$ is viewed as a map from $S_r$ to the sphere bundle over $M$.
\end{defi}
The well-definedness and the geometric invariance of this quantity can be checked directly and one can refer to Section 4 for more details. We call it  {\it Chern mass}, or the $k^{th}$
Chern mass.
A more interesting point is  that this mass has a simple relation to the GBC mass $m_{GBC }$ as well as to the intrinstrically  defined mass $m_I^k$. To be more precisely, we have
\begin{enumerate}
\item [(i)]$d(r^{n-2k} \Phi_k)
=(n-2k)!\,L_k *1+O(r^{-(k+1)\tau-2k}).$  (Lemma \ref{lem1})
\item  [(ii)]$r^{n-2k}\nu^*(\Phi_k)=-2\cdot(n-2k-1)!\, {\mathcal E}^{(k)}(r\frac \p {\p r} ,\frac \p {\p r})d\sigma^g+ O(r^{-(k+1)\tau-2k+1})$. (Lemma \ref{lem2})
\end{enumerate}

\noindent From (ii) it is easy to see that $m_k^C=m_I^k$. From (i) we can prove  $m_k^C=m_{GBC},$ by applying a trick used recently by Herzlich in \cite{H}. This trick was also used by Schoen
in \cite{Schoen}. Therefore, we have

\begin{thm}
If $n>2k$ and $(M^n,g)$ is an asymptotically flat of decay order $\tau>\frac{n-2k}{k+1}$ with integrable Gauss-Bonnet curvature  $L_k$  on $(M^n,g)$. Then we have the equivalence of the three masses
$$m_{GBC}=m_{I}^k=m_k^C.$$
\end{thm}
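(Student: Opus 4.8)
The equality $m_{GBC}=m_I^k=m_k^C$ splits into the two comparisons $m_k^C=m_I^k$ and $m_k^C=m_{GBC}$, driven respectively by the pointwise expansion (ii) and the exterior-derivative identity (i), and I would handle them in that order. The first is the routine half: substituting (ii) into the definition \eqref{eq_chern_mass} gives
\[
m_k^C=\frac{1}{2^k(n-1)!\,\om_{n-1}}\lim_{r\to\infty}\int_{S_r}\left(-2(n-2k-1)!\,\mathcal E^{(k)}\!\left(r\tfrac{\p}{\p r},\tfrac{\p}{\p r}\right)d\sigma^g+O\big(r^{-(k+1)\tau-2k+1}\big)\right).
\]
Over $S_r$, whose area is $O(r^{n-1})$, the remainder contributes $O\big(r^{\,n-(k+1)\tau-2k}\big)$, which tends to $0$ exactly because the hypothesis $\tau>\frac{n-2k}{k+1}$ is equivalent to $(k+1)\tau>n-2k$. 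The constant collapses as $\tfrac{-2(n-2k-1)!}{2^k}=\tfrac{-(n-2k-1)!}{2^{k-1}}$, which is precisely the prefactor of \eqref{Ric_m_k}, so what survives is the integrand of $m_I^k$ with $X=r\tfrac{\p}{\p r}$, except that the second slot carries $\tfrac{\p}{\p r}$ rather than the $g$-unit normal $\nu$. Since $\nu-\tfrac{\p}{\p r}=O(r^{-\tau})$ and this difference merely multiplies an already convergent integrand, $\int_{S_r}\mathcal E^{(k)}\!\big(r\tfrac{\p}{\p r},\tfrac{\p}{\p r}-\nu\big)d\sigma^g\to0$, giving $m_k^C=m_I^k$.

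For $m_k^C=m_{GBC}$ I would use (i) together with Stokes' theorem on the annulus $A_{r_0,r}=\{r_0\le|x|\le r\}$. Statement (i) reads $d\big(r^{n-2k}\Phi_k\big)=(n-2k)!\,L_k*1+O\big(r^{-(k+1)\tau-2k}\big)$, an $n$-form whose error integrates against the volume element $\sim\rho^{n-1}d\rho$ to $O\big(\int_{r_0}^{r}\rho^{\,n-1-(k+1)\tau-2k}d\rho\big)$; the same inequality $(k+1)\tau>n-2k$ makes this integral convergent as $r\to\infty$. Since $L_k$ is integrable, Stokes' theorem shows that $\int_{S_r}r^{n-2k}\nu^*(\Phi_k)$ has a finite limit and expresses it, up to the fixed inner term $\int_{S_{r_0}}r^{n-2k}\nu^*(\Phi_k)$, through $\int_M L_k*1$. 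The defining divergence structure of $P_{(k)}$ in \cite{GWW} ties the GBC integrand of \eqref{GBC0} to the very same bulk quantity $\int_M L_k*1$, so the claim reduces to showing that the two boundary families $\int_{S_r}r^{n-2k}\nu^*(\Phi_k)$ and the GBC boundary integral have the same limiting value.

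The main obstacle is precisely this last reduction. The two $(n-1)$-forms share a leading exterior derivative but live on the end $S^{n-1}\times[R,\infty)$, whose top cohomology $H^{n-1}$ is nontrivial, so their difference, though closed to leading order, need not be exact, and a priori the two fluxes could differ by a nonzero constant. This is where the trick of Herzlich \cite{H}, also used by Schoen \cite{Schoen}, enters: instead of arguing cohomologically one manipulates the difference of the two integrands pointwise on each $S_r$, using the explicit leading expressions furnished by (i) and the conformal-Killing identity for $X=r\tfrac{\p}{\p r}$, to write it as a tangential exact form $d\gamma$ plus a remainder that is $o(r^{-(n-1)})$ on $S_r$. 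The exact part integrates to zero over the closed sphere and the remainder vanishes in the limit, forcing the two limits to coincide, hence $m_k^C=m_{GBC}$. The genuinely delicate point throughout is the bookkeeping of decay exponents: the whole argument works only because the errors in (i) and (ii) sit at order $(k+1)\tau$ rather than $k\tau$, which is exactly what $\tau>\frac{n-2k}{k+1}$ converts into a vanishing contribution at infinity. Combining the two comparisons yields $m_{GBC}=m_I^k=m_k^C$.
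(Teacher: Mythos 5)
Your first comparison, $m_k^C=m_I^k$, is correct and is exactly the paper's argument: it is Lemma \ref{lem2} combined with the decay bookkeeping $(k+1)\tau+2k-1>n-1$ and $\nu=\frac{\p}{\p r}+O(r^{-\tau})$, and your constants match. You also put your finger on the genuine difficulty in the second comparison: Stokes' theorem on an annulus $A(r_0,r)$ ties each of the two fluxes to the bulk integral of $L_k$ only up to an inner boundary term at $S_{r_0}$, so that argument alone leaves open the possibility that $m_k^C$ and $m_{GBC}$ differ by a fixed nonzero constant.

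However, your proposed resolution of that difficulty is not Herzlich's trick, and as stated it is not a proof. You propose to write the difference of the two integrands on each $S_r$ as a tangential exact form $d\gamma$ plus a remainder of size $o(r^{-(n-1)})$. For top-degree forms on the closed sphere $S_r$, ``exact plus a remainder whose integral tends to zero'' is \emph{equivalent} to the statement that the two fluxes have the same limit, so without a mechanism for producing $\gamma$ this step is circular. Moreover, the Remark immediately following the proof of Theorem \ref{thm1} cautions that no such pointwise-type identity should be expected: $r^{n-2k}\Phi_k$ is built from $k$ curvature factors, while the GBC integrand $2*a=\om_{\a_1\a_2}\wedge(*Q^{\a_1\a_2})$ contains only $k-1$ curvature factors together with a connection form, so their difference is not small pointwise. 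What Herzlich's trick actually does (Section 5 of the paper) is interpolate the \emph{metric}, not the integrands: for each large $r$ one sets $h_r=\eta_r\, g+(1-\eta_r)\,\d$, which is the flat metric near the inner sphere $S_{r/4}$, equals $g$ near $S_r$, and is asymptotically flat of order $\tau$ \emph{uniformly} in $r$. Both $(n-1)$-forms vanish identically where $h_r$ is flat, so Stokes on $A(\frac r4,r)$ produces no inner boundary term, and
\begin{equation*}
2\int_{S_r}*a(g)=\int_{A(\frac r4,r)}L_k(h_r)*1+o(1),\qquad
\frac{1}{(n-2k)!}\int_{S_r}r^{n-2k}\Phi_k(g)=\int_{A(\frac r4,r)}L_k(h_r)*1+o(1),
\end{equation*}
where the $o(1)$ comes from $\int_{A(\frac r4,r)}O(r^{-(k+1)\tau-2k})=O(r^{n-(k+1)\tau-2k})$ and $\tau>\frac{n-2k}{k+1}$. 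The two fluxes are thus both identified with one and the same bulk quantity for the \emph{modified} metric, which is precisely what kills the constant ambiguity; no comparison of the integrands on $S_r$ is ever made, and no integrability of $L_k(g)$ over the annuli is needed. This gluing construction is the missing idea in your write-up.
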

When $k=1$ this result also provides
a simple proof of the equivalence between the ADM mass  $m_{ADM}$ and  $m_I$, mentioned above.

We remark that from our work, one can see that the mass can be seen as a generalization of Gauss-Bonnet-Chern theorem for asymptotically flat manifolds, or more precisely, as a
renormalized Gauss-Bonnet-Chern theorem. See Remark \ref{rem_add} below.

In addition to the mass,  it is also interesting to define other invariants, such as the center of mass for these higher order masses. Related to the ADM mass $m_{ADM}$, under some additional parity condition at infinity,  the Hamiltonian formulation of center of mass was proposed by Regge-Teitelboim \cite{RT} (See also the work of Beig-\'O Murchadha \cite{BO}). Similar to (\ref{mI}), it is also known that there is an alternative formulation of a center of mass suggested by R. Schoen \cite{Huang} using the Einstein tensor and Euclidean conformal Killing vector fields.  Their equivalence has been first proved by Huang \cite{Huang} by applying a density theorem and very recently by Miao-Tam \cite{MT} and Herzlich \cite{H} via different methods.
From our work, one can also define a corresponding center for  our masses and show their equivalence. We leave this to the interested reader.

The rest of the paper is organized as follows: In Section 2, we recall some definitions as well as the properties, including the Gauss-Bonnet-Chern curvature, the GBC mass and the
generalized Einstein tensor. Chern's magic forms $\Phi_k$ are reviewed in Section 3 and we apply it to define the Chern mass in Section 4. Section 5 is devoted to the equivalence of
$m_{GBC}$ and $m_k^C$. In Section 6, we show that $m_{I}^k=m_k^C.$

\section{The Gauss-Bonnet-Chern curvature and mass}
Recall that the Gauss-Bonnet-Chern curvature is given by
 \begin{equation}\label{Lk2}
L_k:=\frac{1}{2^k}\d^{i_1i_2\cdots i_{2k-1}i_{2k}}
_{j_1j_2\cdots j_{2k-1}j_{2k}}{R_{i_1i_2}}^{j_1j_2}\cdots
{R_{i_{2k-1}i_{2k}}}^{j_{2k-1}j_{2k}},
\end{equation}
where $\d^{i_1i_2\cdots i_{2k-1}i_{2k}}
_{j_1j_2\cdots j_{2k-1}j_{2k}}$ is the generalized Kronecker delta defined by
\begin{equation}\label{generaldelta}
 \d^{j_1j_2 \cdots j_r}_{i_1i_2 \cdots i_r}=\det\left(
\begin{array}{cccc}
\d^{j_1}_{i_1} & \d^{j_2}_{i_1} &\cdots &  \d^{j_r}_{i_1}\\
\d^{j_1}_{i_2} & \d^{j_2}_{i_2} &\cdots &  \d^{j_r}_{i_2}\\
\vdots & \vdots & \vdots & \vdots \\
\d^{j_1}_{i_r} & \d^{j_2}_{i_r} &\cdots &  \d^{j_r}_{i_r}
\end{array}
\right),
\end{equation}
and ${R_{ij}}^{sl}$ is the Riemannian curvature tensor in the local coordinates. One can easily check that $L_1$ is just the
scalar curvature $R$. When $k=2$, it is the (second) Gauss-Bonnet curvature
\[L_2 = \|Rm\|^2-4\|Ric\|^2+R^2,\]
which appeared at the first time in the paper of Lanczos \cite{Lan} in 1938.

In the definition of the Gauss-Bonnet-Chern mass \cite{GWW}, the key observation is that the Gauss-Bonnet curvature has the
following decomposition
$$L_k=P^{ijls}_{(k)} R_{ijls},$$
with the crucial property that $P_{(k)}$ is divergence-free, i.e.,
\begin{equation}\label{eq_a1}
 \nabla_iP^{ijls}_{(k)}=0,
\end{equation}
and has the same symmetry as the curvature tensor.
Here the 4-tensor $P_{(k)}$ is defined by
\begin{equation}
P^{ijls}_{(k)}:=\frac{1}{2^k}\d^{i_1i_2\cdots i_{2k-3}i_{2k-2}ij}
_{j_1j_2\cdots  j_{2k-3}j_{2k-2} j_{2k-1}j_{2k}}{ R_{i_1i_2}}^{ j_1 j_2}\cdots
{R_{i_{2k-3}i_{2k-2}}}^{ j_{2k-3}j_{2k-2}}g^{j_{2k-1}l}g^{j_{2k}s}.
\end{equation}
For asymptotically flat manifolds, one has the following asymptotic expansion for $L_k$ \cite{GWW},
\begin{equation}\label{eq1.3}
L_k =2\partial_i(P_{(k)}^{ijls} \partial_s g_{jl}) +O(r^{-(k+1)\tau-2k}),
\end{equation}
which suggests the definition of the GBC mass in \cite{GWW} by
\begin{equation}\label{GBC}
m_k=m_{GBC}:=c(n,k)
\lim_{r\to\infty}\int_{S_r}P_{(k)}^{ijls}\partial_s g_{jl} \nu_{i}^{\delta}d\sigma^{\delta},
\end{equation}
with
\[c(n,k)=\frac{(n-2k)!} {2^{k-1}(n-1)!\,\omega_{n-1}},\]
provided $L_k$ is integrable on $(M^n,g)$ and $\tau>\frac{n-2k}{k+1}$. Here the constant $c(n,k)$ is determined by calculating the mass of the
Schwarzschild-type solution in the Gauss-Bonnet gravity to obtain the expected answer. One can easily see that $m_1$ is exactly the ADM mass $m_{ADM}$.

The  Einstein-like tensor associated  to $L_k$ is introduced and characterized  by Lovelock \cite{Lo} by
\begin{equation}\label{2}
\mathcal{E}^{(k)i}{}_j = -{1\over 2^{k+1}}\d^{ii_1i_2\cdots i_{2k-1}i_{2k}}
_{jj_1j_2\cdots j_{2k-1}j_{2k}}{R_{i_1i_2}}^{j_1j_2}\cdots
{R_{i_{2k-1}i_{2k}}}^{j_{2k-1}j_{2k}}.
\end{equation}
As a convention, we set $\mathcal{E}^{(0)}=g$. When $k=1$, then $\mathcal{E}^{(1)}$ is just the usual Einstein tensor $E:=\Ric-\frac 12 R_g g$. Like the usual Einstein tensor $E$,
the
generalized Einstein-like tensor $\mathcal{E}^{(k)}$ satisfies a conversation law, namely, $\div\mathcal{E}^{(k)}=0$, i.e., $\nabla_j\mathcal{E}^{(k)}_{ij}=0$. $\mathcal{E}^{(k)}$ is
called the $k$-th Lovelock curvature and one can see
easily that ${\rm Trace}\,\; \mathcal{E}^{(k)}=-\frac{n-2k}{2}L_k$.

As indicated in the Introduction, with this generalized Einstein tensor ${\mathcal E}^{(k)}$,
we can introduce a higher order intrinsically defined mass $m_{I}^k$ by
\begin{eqnarray*}
m^k_{I}&:=&-\frac{(n-2k-1)!}{2^{k-1}(n-1)!\,\om_{n-1}}\lim_{r\to \infty} \int_{S_r}  {\mathcal E}^{(k)}(X , \nu)d\sigma^g\\
&=&-\frac{(n-2k-1)!}{2^{k-1}(n-1)!\,\om_{n-1}}\lim_{r\to \infty} \int_{S_r}  {\mathcal E}^{(k)}(r\frac\p {\p r} , \frac \p {\p r})d\sigma^g,
\end{eqnarray*}
since $\nu=\frac{\p}{\p r}+O(r^{-\tau})$ for AF manifolds of decay order $\tau>\frac {n-2k}{k+1}.$

One can also write $L_k$  in terms of differential forms as follows.
\begin{eqnarray*}
L_k*1&=&\frac{1}{2^k}\delta^{\a_1\a_2\cdots\a_{2k}}_{\b_1\b_2\cdots\b_{2k}}{R_{\a_1\a_2}}^{\b_1\b_2} \cdots {R_{\a_{2k-1}\a_{2k}}}^{\b_{2k-1}\b_{2k}}\om_{1}\wedge \cdots \wedge
\om_{n}\\
&=&\frac{1}{(n-2k)!}\e^{\a_1\a_2 \cdots \a_n}  \Omega_{\a_1\a_2}\wedge  \cdots
 \wedge
 \Omega_{\a_{2k-1}\a_{2k}} \wedge \om_{\a_{2k+1}}\wedge \cdots \wedge \om_{\a_n},
 \end{eqnarray*}
 where $\e^{\a_1\a_2 \cdots \a_n}:=\delta^{\a_1\a_2 \cdots \a_n}_{1~2~\cdots~ n}$, $\{\om_\alpha\}_{\alpha=1}^n$ is the dual coframe of a local frame $\{e_\alpha\}_{\alpha=1}^n$, $*1$
 is the volume form and $\Omega_{\a_1\a_2}$ is the curvature two-form given by
 $$\Omega_{\a_1\a_2}=  \frac 12{R_{\a_1\a_2}}^{\b_1\b_2} \om_{\b_1}\wedge \om_{\b_2}.$$
 Define a two form $Q$ from the $P_{(k)}$ curvature tensor by
 $$Q^{\a\b}=P_{(k)}^{\a\b\gamma\delta} \om_\gamma\wedge \om_\delta.$$
 It is easy to check that \eqref{eq_a1} is equivalent to
 \begin{equation}\label{eq_a2}
d^*Q^{\a\b}=0,
\end{equation}
where $d^*$ is the dual operator of the differential operator $d$. The Hodge dual operator of $Q^{\a\b}$ is
\begin{eqnarray}\label{add_c}
  (*Q^{\a_1\a_2})&=&\frac 1{(n-2k)!} \e^{\a_1\a_2 \cdots \a_n} \Omega_{\a_{3}\a_{4}} \wedge  \cdots  \wedge
 \Omega_{\a_{2k-1}\a_{2k}} \wedge \om_{\a_{2k+1}}\wedge \cdots \wedge \om_{\a_n}\nonumber\\
 &=& \frac1{2^{k-1}\cdot(n-2k)!}\e^{\a_1\a_2 \cdots \a_n}
 {R_{\a_3\a_4}}^{\b_3\b_4} \cdots {R_{\a_{2k-1}\a_{2k}}}^{\b_{2k-1}\b_{2k}} \om_{\b_3}\cdots \wedge  \om_{\b_{2k}}\nonumber\\
 &&\wedge\om_{\a_{2k+1}}\wedge \cdots \wedge \om_{\a_n}.
 \end{eqnarray}
It is clear from \eqref{add_c} that
$$L_k\ast 1=\Omega_{\a_1\a_2}\wedge (*Q^{\a_1\a_2}).$$
 Let $\om_{\a\b}$ be the Levi-Civita connection one-form with respect to the local frame $\{e_\alpha\}_{\alpha=1}^n$. Now we can rewrite \eqref{eq1.3} in terms of differential forms
\begin{equation}\label{eq1.4}
L_k*1=d(\om_{\a_1\a_2}\wedge(*Q^{\a_1\a_2} ))+O(r^{-(k+1)\tau-2k}),
\end{equation}
which is the formula used in \cite{LN}. In fact it follows readily from \eqref{eq_a2},
if one computes $ d(\om_{\a_1\a_2}\wedge(*Q^{\a_1\a_2}))$ directly:
\begin{equation}\label{eq2.7} \begin{array}{rcl}
    d(\om_{\a_1\a_2}\wedge(*Q^{\a_1\a_2})) &=& d\om_{\a_1\a_2}\wedge(*Q^{\a_1\a_2} ) \\
   &=& (\Omega_{\a_1\a_2}+ \om_{\a_1\b}\wedge \om_{\b \a_2})\wedge(*Q^{\a_1\a_2}) \\
   &=& L_k* 1+  \om_{\a_1\b}\wedge \om_{\b \a_2}\wedge (*Q^{\a_1\a_2})\\
   &=& L_k*1+O(r^{-(k+1)\tau-2k}).
\end{array}
\end{equation}
In the first equality, we have used \eqref{eq_a2}.
We remark that unlike $L_k*1$, the $n-1$ form $\om_{\a_1\a_2}\wedge(*Q^{\a_1\a_2}) $  does depend on the frame. Or  in other words,
$\om_{\a_1\a_2}\wedge(*Q^{\a_1\a_2}) $ is an $n-1$ form on the frame bundle over $M$.
Therefore, when we use the expression $P_{(k)}^{ijls} \partial_s g_{jl}$, or
equivalently using $\om_{\a_1\a_2}\wedge(*Q^{\a_1\a_2} )$ to define a mass, we need to check
that the defined mass does not depend on the choice of the frames.

The mass $m_{GBC}$ defined above
 is trivially the same if one uses $ \om_{\a_1\a_2}\wedge(*Q^{\a_1\a_2})$ to define a mass, as in \cite{LN}.

The GBC mass was first studied in \cite{GWW} and its positivity was proved in \cite{GWW} for graphic manifolds and in \cite{GWW2} for conformally flat manifolds.
The GBC mass for higher codimensional graphs was studied in \cite{LWX} and \cite{GM}.

\section{Chern's magic forms}

To prove the Gauss-Bonnet formula for a general closed Riemannian manifold ${M}^n$,
Chern \cite{Chern1, Chern2} turn to consider the sphere bundle $\mathcal{S}({M}):=
\{(p,v):p\in{M}, v\in T_p(M) \text{ and }|v|=1 \}$ of dimension $2n-1$. More precisely, he introduced the following
important forms:
\begin{align}\label {eq3.1}
\Phi_k &=
\e^{\a_1\a_2 \cdots \a_{n-1}}  \Omega_{\a_1\a_2} \wedge  \cdots  \wedge
 \Omega_{\a_{2k-1}\a_{2k}} \wedge \om_{\a_{2k+1}n}\wedge \cdots \wedge \om_{\a_{n-1}n},\\\label{eq3.2}
 \Psi_k &=
 2(k+1) \e^{\a_1\a_2 \cdots \a_{n-1}}  \Omega_{\a_1\a_2}  \wedge  \cdots  \wedge
 \Omega_{\a_{2k-1}\a_{2k}} \wedge  \Omega_{\a_{2k+1}n} \wedge \om_{\a_{2k+2}n}\wedge \cdots \wedge \om_{\a_{n-1}n},
\end{align}
where $e_n=\nu$ denotes the unit outer normal vector field of $S_r$ and $\e^{\a_1\a_2 \cdots \a_{n-1}}
=\delta^{\a_1\a_2 \cdots \a_{n-1}}_{12\cdots n-1}$ as before. Note that $\Phi_k$ are $n-1$ forms, $\Psi_k$
are $n$ forms and if $n$ is even, $\Psi_{\frac{n}{2}-1}$ equals the Paffian $\Omega=L_{[\frac{n}{2}]}*1$ . More importantly, it was proved by Chern \cite{Chern1, Chern2} that
$\Omega$ is an exact form (i.e. it is the exterior derivative of a $n-1$ form) by observing the following
iteration relation between these forms:
\begin{equation}\label{eq3.3}
d \Phi_k=\Psi_{k-1}+\frac {n-2k-1}{2(k+1)} \Psi_k,\quad k=0,1,\cdots,\left[\frac n2\right]-1,
\end{equation}
where $[\frac n2]$ denotes the largest integer $\le n/2$.  From \eqref{eq3.3} Chern obtained
\begin{equation}\label{eq3.4}
L_{\frac n2} *1=d\Pi,
\end{equation}
{ where}
\begin{equation*}\Pi:=(-1)^{\frac n2-1}\sum_{k=0}^{\frac n2 -1}(-1)^k \frac{(\frac n2-k-1)!\,(\frac n2)!}{(n-2k-1)!\,k!\,2^{2k-n+1}}\Phi_k,
\end{equation*}
when $n$ is even. Therefore, the Euler density $L_{\frac n2} *1$ is an ``exact form" with $\Pi$, a form on the sphere bundle. The intrinsic proof of the Gauss-Bonnet Theorem
follows from \eqref{eq3.4} and the Poincar\'e-Hopf index theorem.
See also a nice survey \cite{Weiping}.

If one compares \eqref{eq2.7} with \eqref{eq3.3}, one can see that they look quite similar.
It should be noticed that $\Phi_k$  ($\Psi_k$ resp.)   are $n-1$ forms  ($n$ forms resp.) defined on the sphere bundle, while
$L_k*1$ is an $n$ form defined on $M$ and $\om_{\a_1\a_2}\wedge(*Q)^{\a_1\a_2} $($\om_{\a_1\b}\wedge \om_{\b \a_2}\wedge(*Q)^{\a_1\a_2} $ resp.)
are $n-1$ forms($n$ forms resp.) on the frame bundle.

Due to this similarity, a natural question arises: can one define an invariant by using $\Phi_k$ for
asymptotically flat manifolds?

\section{A mass defined by using $\Phi_k$}
In this section we modify  the Chern form $\Phi_k$ slightly  to define a mass for asymptotically flat manifolds.

Let $S_r$ be a coordinate sphere of radius $r$ in the asymptotically flat manifolds $(M^n, g)$, for large $r$.
Let $\nu$ be the outerward unit normal vector field along $S_r$.  Viewing $\nu$ as a map from $S_r$ to the sphere bundle over $M$,
we consider the pull-back form $\nu^*(\Phi_k)$ and  define a quantity by
$$\tilde m_k=\lim_{r\to \infty} \int_{S_r} r^{n-2k}\nu^*(\Phi_k),$$
whenever this limit is convergent.

First, we need the following decay estimates:
\begin{lem} \label{lem0}
On an asymptotically flat manifold $(M^n, g)$ with decay order $\tau$, we have
 $\om_{jn}=-\frac 1 r \om_{j}+ O( r^{-1-\tau})$ for all $j=1,2,\cdots, n-1$.
\end{lem}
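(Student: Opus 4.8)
The plan is to compute the connection one-form $\om_{jn}$ directly from the structure equations, exploiting that the frame is adapted so that $e_n=\nu$ is the outward unit normal along the coordinate spheres and that the manifold is Euclidean to leading order. The quantity $\om_{jn}$ encodes how the normal $e_n$ rotates as we move tangentially, which is essentially the second fundamental form of $S_r$; thus the statement $\om_{jn}=-\frac1r\om_j+O(r^{-1-\tau})$ is the assertion that the coordinate sphere $S_r$ is, to leading order, a round Euclidean sphere of radius $r$ with mean-curvature-type quantity $1/r$, and that the metric perturbation $\sigma_{ij}=O(r^{-\tau})$ corrects this only at order $r^{-1-\tau}$.

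First I would set up a frame $\{e_\alpha\}$ that is adapted to the foliation by coordinate spheres, with $e_n=\nu$ pointing radially, and I would take as a reference the flat computation. In the standard Euclidean metric $\delta$, writing $r\frac{\p}{\p r}$ for the position vector field $X$, the Levi-Civita connection satisfies $\bar\nabla_{e_j} X = e_j$ for $j$ tangent to the sphere, and since $\nu=\frac1r X$ one gets $\bar\nabla_{e_j}\nu=\frac1r e_j$ exactly on the round sphere. Translating this into coframe language, the flat connection form is $\bar\om_{jn}=\la \nabla_{e_j}e_n, e_j\ra\,\om_j$-type expression, which gives precisely $\bar\om_{jn}=-\frac1r\om_j$ with the sign dictated by the convention $\om_{\a\b}=-\om_{\b\a}$ and by whether the index $n$ sits in the first or second slot. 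I would fix the sign by this explicit flat check rather than by abstract reasoning.

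Next I would estimate the difference between the true connection form $\om_{jn}$ and its flat counterpart. Since $g_{ij}=\delta_{ij}+\sigma_{ij}$ with the decay hypotheses $|\sigma_{ij}|+r|\p\sigma_{ij}|=O(r^{-\tau})$, the Christoffel symbols differ from the Euclidean ones by terms built from $\p\sigma$, hence of size $O(r^{-1-\tau})$ once one accounts for the extra $r^{-1}$ coming from differentiating along the sphere (tangential derivatives carry a $1/r$ against the coordinate derivative $\p$). The connection one-forms, being linear combinations of Christoffel symbols contracted against the coframe, therefore satisfy $\om_{jn}-\bar\om_{jn}=O(r^{-1-\tau})$; combining with $\bar\om_{jn}=-\frac1r\om_j$ yields the claim. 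I would also note that the orthonormalization of the coordinate frame into $\{e_\alpha\}$ introduces corrections only at order $O(r^{-\tau})$ in the frame itself, which feed into $\om_{jn}$ at the same subleading order $O(r^{-1-\tau})$ and are thus harmless.

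The main obstacle I anticipate is purely bookkeeping: getting the sign and the normalization of the $1/r$ term exactly right, since these depend on the antisymmetry convention for $\om_{\a\b}$, on the identification $\nu=\frac1r X+O(r^{-\tau})$, and on whether one reads the coframe $\om_j$ as the flat or the metric coframe on $S_r$ (they agree to leading order, so this affects only the remainder). The decay estimate itself is routine once the flat model is pinned down. I would therefore spend most of the effort on the explicit Euclidean computation of $\bar\om_{jn}$ for the round sphere, treat the perturbation by a single uniform estimate on the Christoffel symbols, and present the error terms as $O(r^{-1-\tau})$ without grinding through every index contraction.
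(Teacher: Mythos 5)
Your proposal is correct, but it takes a different route from the paper. The paper's proof is three lines long: it chooses $e_n=\nu=\nabla_g r/|\nabla_g r|=\frac{\p}{\p r}+O(r^{-\tau})$, so that $\om_n=dr+O(r^{-\tau})$; it then inserts this into the first structure equation $d\om_n=\om_j\wedge\om_{jn}$, which forces the $\om_n$-component of $\om_{jn}$ to be $O(r^{-1-\tau})$ (the tangential part $h_{ij}\,\om_j\wedge\om_i$ drops out by the symmetry of the second fundamental form), giving $\om_{jn}=h_{ij}\om_i+O(r^{-1-\tau})$; and it concludes by quoting the asymptotics $h_{ij}=-\frac{\delta_{ij}}{r}+O(r^{-1-\tau})$ of coordinate spheres. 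Your argument instead compares the whole connection form with the flat model: $\bar\om_{jn}=-\frac1r\om_j$ from the shape operator of the round sphere, plus a uniform perturbation estimate coming from the Christoffel symbols and the $O(r^{-\tau})$ frame correction. This is more self-contained---it is in effect a proof of the fact $h_{ij}=-\frac{\delta_{ij}}{r}+O(r^{-1-\tau})$ that the paper simply quotes---at the price of the bookkeeping you flag, whereas the structure equation disposes of the normal component of $\om_{jn}$ for free. Two small corrections to your write-up: first, the Christoffel symbols are $O(r^{-1-\tau})$ directly from the hypothesis $r|\p\sigma_{ij}|=O(r^{-\tau})$, so no ``extra $1/r$ from tangential differentiation'' is needed; second, since the lemma is later applied to $\om_{jn}$ as a $1$-form near infinity (inside $d(r^{n-2k}\Phi_k)$), not merely to its restriction to $S_r$, you should note that the flat model also has vanishing radial component, $\bar\om_{jn}\bigl(\frac{\p}{\p r}\bigr)=\la\bar\nabla_{\p_r}e_j,e_n\ra=0$, so that your perturbation bound controls all components---your framework does deliver this, but your explicit flat computation only addresses the tangential directions.
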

\begin{proof}
  First we choose  $e_n=\nu=\frac{\nabla_g r}{|\nabla_g r|}=\frac{\partial}{\partial r}+O(r^{-\tau})$, which implies
  $$\om_n=dr+O(r^{-\tau}).$$
In view of the first structure equation $d\om_n=\om_j\wedge \om_{jn}$, one can derive
  $$\om_{jn}=h_{ij}\om_i+O(r^{-1-\tau}),\quad i=1,\cdots, n-1.$$
The conclusion follows by noting that $h_{ij}=-\frac{\delta_{ij}}{r}+O(r^{-1-\tau}).$
\end{proof}

With the above lemma, it is crucial to observe the following
\begin{lem} \label{lem1}
\begin{equation}\label{key}
d(r^{n-2k} \Phi_k)
=(n-2k)!\,L_k *1+O(r^{-(k+1)\tau-2k}).
\end{equation}
\end{lem}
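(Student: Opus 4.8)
The plan is to compute $d(r^{n-2k}\Phi_k)$ directly using the product rule, and to show that the dangerous terms either vanish or are absorbed into the error $O(r^{-(k+1)\tau-2k})$. Writing $d(r^{n-2k}\Phi_k) = d(r^{n-2k})\wedge\Phi_k + r^{n-2k}\,d\Phi_k$, I would handle the two pieces separately. The second piece is governed by Chern's iteration formula \eqref{eq3.3}, which gives $d\Phi_k = \Psi_{k-1} + \frac{n-2k-1}{2(k+1)}\Psi_k$; the first piece contributes $d(r^{n-2k}) = (n-2k)r^{n-2k-1}\,dr \wedge \Phi_k$, and by Lemma \ref{lem0} together with $\om_n = dr + O(r^{-\tau})$ this should recombine with the $\Psi_k$ term coming from $d\Phi_k$.

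The key mechanism I expect is the following. The form $\Phi_k$ contains the factors $\om_{\a_{2k+1}n}\wedge\cdots\wedge\om_{\a_{n-1}n}$, and by Lemma \ref{lem0} each $\om_{jn} = -\frac{1}{r}\om_j + O(r^{-1-\tau})$. Substituting this into $\Phi_k$ turns those $n-1-2k$ connection factors into $(-1/r)^{n-1-2k}$ times $\om_{\a_{2k+1}}\wedge\cdots\wedge\om_{\a_{n-1}}$, up to errors of order $r^{-\tau}$ relative to the leading term. Thus $r^{n-2k}\Phi_k$ should, to leading order, look like $r\cdot(-1)^{n-1-2k}\cdot\Omega_{\a_1\a_2}\wedge\cdots\wedge\Omega_{\a_{2k-1}\a_{2k}}\wedge\om_{\a_{2k+1}}\wedge\cdots\wedge\om_{\a_{n-1}}$, and differentiating this produces, via the extra $dr$ and via $d\om_j = -\om_{jl}\wedge\om_l$, precisely an expression proportional to $\Omega_{\a_1\a_2}\wedge(*Q^{\a_1\a_2})$, which we already know equals $L_k * 1$. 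The combinatorial constant $(n-2k)!$ in \eqref{key} should emerge from matching the normalization $\e^{\a_1\cdots\a_{n-1}}$ in $\Phi_k$ against the $\e^{\a_1\cdots\a_n}$ appearing in $L_k*1$ and in the definition of $*Q^{\a_1\a_2}$ in \eqref{add_c}, where a factor $1/(n-2k)!$ already sits.

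Concretely, I would (i) expand $d(r^{n-2k}\Phi_k)$ by the product rule; (ii) apply \eqref{eq3.3} to rewrite $d\Phi_k$; (iii) use Lemma \ref{lem0} to replace every $\om_{jn}$ by $-\frac 1r\om_j$ modulo $O(r^{-1-\tau})$, and $\om_n$ by $dr$ modulo $O(r^{-\tau})$, tracking the power of $r$ so that the prefactor $r^{n-2k}$ exactly cancels the $r^{-(n-1-2k)}$ coming from the normal connection factors, leaving a single power of $r$ that combines with $dr$; (iv) recognize the resulting leading $n$-form as $\Omega_{\a_1\a_2}\wedge(*Q^{\a_1\a_2}) = L_k*1$ after fixing constants. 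The error bookkeeping is routine: each substitution introduces a relative error $O(r^{-\tau})$, and since the leading term scales like a product of $k$ curvature two-forms (each $O(r^{-\tau-2})$ after differentiation) the worst surviving error is of order $r^{-(k+1)\tau-2k}$, matching the claim.

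The main obstacle I anticipate is the bookkeeping in step (iii)–(iv): one must verify that the $\Psi_{k-1}$ and $\Psi_k$ terms from Chern's relation, after the $\om_{jn}\to -\frac 1r\om_j$ substitution, combine with the $d(r^{n-2k})\wedge\Phi_k$ term so that the coefficients of $\Psi_k$ cancel exactly (leaving no spurious leading-order piece), and that the surviving $\Psi_{k-1}$-type contribution reorganizes into $L_k*1$ with the correct constant $(n-2k)!$. This cancellation of the $\Psi_k$ coefficient is the analogue of the algebraic identity $\frac{n-2k-1}{2(k+1)}$ in \eqref{eq3.3} being precisely tuned, and confirming that the factor $r^{n-2k}$ is the unique weight making it work is the delicate point. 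Everything else is controlled differentiation and curvature-decay estimates.
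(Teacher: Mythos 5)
Your plan follows the same skeleton as the paper's proof (product rule, Chern's relation \eqref{eq3.3}, the substitution of Lemma \ref{lem0}, and matching against $L_k*1$), and your final error estimate is the right one, but the structural mechanism you describe is wrong at exactly the step you yourself flag as delicate. You expect the $\Psi_k$ term of \eqref{eq3.3} to \emph{cancel exactly} against the $(n-2k)r^{n-2k-1}dr\wedge\Phi_k$ term, and you expect $L_k*1$ to emerge from the surviving ``$\Psi_{k-1}$-type'' contribution alone. Neither is true, and no such cancellation can exist: $\Psi_k$ contains $k+1$ curvature two-forms while $dr\wedge\Phi_k$ contains only $k$, so they sit at different orders in curvature and cannot cancel algebraically. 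What kills the $\Psi_k$ term is decay, not cancellation: each $\Omega_{\a\b}=O(r^{-\tau-2})$ and each $\om_{jn}=O(r^{-1})$ by Lemma \ref{lem0}, so $\Psi_k=O(r^{-(k+1)\tau-n})$ and hence $r^{n-2k}\Psi_k=O(r^{-(k+1)\tau-2k})$; it is simply absorbed into the error in \eqref{key}. This is also why the mechanism here differs from Chern's closed-manifold argument, where the $\Psi$'s must cancel exactly through the telescoping combination $\Pi$ of all the $\Phi_j$; on an asymptotically flat manifold a single weighted form $r^{n-2k}\Phi_k$ suffices precisely because the leftover $\Psi_k$ is of lower order.

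Correspondingly, $L_k*1$ is not recovered from $\Psi_{k-1}$ alone: it is the \emph{sum} of the two leading terms. Expand $(n-2k)!\,L_k*1=\e^{\a_1\cdots\a_n}\,\Omega_{\a_1\a_2}\wedge\cdots\wedge\Omega_{\a_{2k-1}\a_{2k}}\wedge\om_{\a_{2k+1}}\wedge\cdots\wedge\om_{\a_n}$ according to whether the index value $n$ sits in a curvature slot ($2k$ possibilities, giving terms with one normal curvature factor $\Omega_{\a n}$) or in a coframe slot ($n-2k$ possibilities, giving terms containing $\om_n=dr+O(r^{-\tau})$). After rewriting $\om_\a=-r\,\om_{\a n}+O(r^{-\tau})$ (Lemma \ref{lem0} read backwards; the sign factors $(-1)^n(-r)^{n-2k}$ and $(-1)^{n-1}(-r)^{n-2k-1}$ both come out positive), the first batch is exactly $r^{n-2k}\Psi_{k-1}$ and the second batch is exactly $(n-2k)r^{n-2k-1}dr\wedge\Phi_k$, up to $O(r^{-(k+1)\tau-2k})$. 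If you instead hunt for a $\Psi_k$ cancellation and try to reorganize $\Psi_{k-1}$ by itself into $L_k*1$, the computation will not close: you will be missing precisely the $n-2k$ terms of $L_k*1$ in which $dr$ appears. With these two corrections---(a) estimate $r^{n-2k}\Psi_k$ into the error by counting curvature factors, and (b) identify $r^{n-2k}\Psi_{k-1}+(n-2k)r^{n-2k-1}dr\wedge\Phi_k$ with $(n-2k)!\,L_k*1$ via the index splitting above---your outline becomes the paper's proof.
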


\begin{proof} By using the Chern formula \eqref{eq3.3} together with \eqref{eq3.1} and \eqref{eq3.2},  we have
\begin{eqnarray*}
d(r^{n-2k} \Phi_k) &=& r^{n-2k} d\Phi_k+(n-2k)r^{n-2k-1} dr \wedge \Phi_k\\
&=&r^{n-2k}\left(\Psi_{k-1}+\frac {n-2k-1}{2(k+1)}\Psi_k\right)+(n-2k)r^{n-2k-1}dr\wedge \Phi_k\\
&=&r^{n-2k}\left(\Psi_{k-1}+(n-2k)\frac 1 r dr\wedge \Phi_k\right)+O(r^{-(k+1)\tau-2k})\\
&=& r^{n-2k}
\e^{\a_1\cdots\a_{n-1}}\Big(2k \Omega_{\a_1\a_2}\wedge \cdots \wedge \Omega_{\a_{2k-1}n}\wedge \om_{\a_{2k}n}\wedge\cdots \wedge  \om_{\a_{n-1}n} \\
&&+(n-2k) \frac 1 r  dr \wedge \Omega_{\a_1\a_2} \wedge  \cdots  \wedge
 \Omega_{\a_{2k-1}\a_{2k}} \wedge \om_{\a_{2k+1}n}\wedge \cdots \wedge \om_{\a_{n-1}n}\Big)\\
 &&+O(r^{-(k+1)\tau-2k}).
\end{eqnarray*}
On the other hand, using $\om_{jn}=-\frac{\om_j}{r}+O(r^{-1-\tau}) $ ($j=1,\cdots, n-1$) and $\om_n=dr+O(r^{-\tau})$  (Lemma \ref{lem0}), we have

 \begin{eqnarray*}
(n-2k)!\,L_k*1 &=& \e^{\a_1\a_2 \cdots \a_n}
 \Omega_{\a_1\a_2}\wedge \cdots \wedge \Omega_{\a_{2k-1}\a_{2k}}\wedge \om_{\a_{2k+1}}\wedge\cdots \wedge  \om_{\a_{n}}\\
  &=& \e^{\a_1\a_2 \cdots \a_{n-1}}
  \Big(2k\cdot(-1)^n\Omega_{\a_1\a_2}\wedge \cdots \wedge \Omega_{\a_{2k-1}n}\wedge \om_{\a_{2k}}\wedge\cdots \wedge  \om_{\a_{n-1}} \\
  &&+(n-2k)\cdot(-1)^{n-1} dr \wedge \Omega_{\a_1\a_2} \wedge  \cdots  \wedge
 \Omega_{\a_{2k-1}\a_{2k}} \wedge \om_{\a_{2k+1}}\wedge \cdots \wedge \om_{\a_{n-1}}\Big)\\
 &&+O(r^{-(k+1)\tau-2k})\\
&=& r^{n-2k}
\e^{\a_1\cdots\a_{n-1}}\Big(2k \Omega_{\a_1\a_2}\wedge \cdots \wedge \Omega_{\a_{2k-1}n}\wedge \om_{\a_{2k}n}\wedge\cdots \wedge  \om_{\a_{n-1}n} \\
&&+(n-2k) \frac 1 r  dr \wedge \Omega_{\a_1\a_2} \wedge  \cdots  \wedge
 \Omega_{\a_{2k-1}\a_{2k}} \wedge \om_{\a_{2k+1}n}\wedge \cdots \wedge \om_{\a_{n-1}n}\Big)\\
 &&+O(r^{-(k+1)\tau-2k}).
 \end{eqnarray*}
Thus we complete the proof.
\end{proof}
\begin{rema}\label{rem_add} Formula \eqref{key} means that on an asymptotically flat manifold, up to a higher order term,
 $L_k*1$ is an ``exact form" with $r^{n-2k}\Phi_k$, a form on the sphere bundle. In spirit, it is very similar to \eqref{eq3.4}.

\end{rema}

From Lemma \ref{lem1}, we can define a mass by using the form $\Phi_k$.
\begin{defi} Let $(M,g)$ be an asymptotically flat  manifold of decay order $\tau>\frac{n-2k}{k+1}$ and with $L_k\in L^1(M,g)$.
For each given integer $1\le k< \frac n2$ we define $k^{th}$-Chern mass by
\begin{equation}
m^C_k=\frac{1}{2^k(n-1)!\,\om_{n-1}}\lim_{r\to \infty} \int_{S_r} r^{n-2k}\nu^*(\Phi_k).
\end{equation}
\end{defi}

Lemma \ref{lem1} implies that the limit in $m_k^C$ is convergent. Moveover, one can prove that

\begin{lem}$m_k^C$ is well-defined and a geometric invariant, provided that $L_k$ is integrable and $\tau>\frac{n-2k}{2k+1}$.

\end{lem}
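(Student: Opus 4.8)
The plan is to establish two separate claims: first, that $m_k^C$ is well-defined in the sense that the limit does not depend on the choice of coordinate system used to define the spheres $S_r$ and the radial compactification, and second, that it coincides with a genuine geometric invariant of $(M,g)$—indeed with $m_{GBC}$, which is already known to be coordinate-independent from \cite{GWW}. Since Lemma \ref{lem1} shows that $r^{n-2k}\Phi_k$ is, up to a term of order $O(r^{-(k+1)\tau-2k})$, a primitive of $(n-2k)!\,L_k*1$, the natural strategy is to deduce invariance from the integrability of $L_k$ together with Stokes' theorem, exactly mirroring the way the ADM mass invariance is obtained from the divergence structure of its integrand.

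First I would fix two asymptotically flat coordinate charts and denote by $S_r^1, S_r^2$ the corresponding coordinate spheres. The key identity from Lemma \ref{lem1} gives
\begin{equation}\label{plan1}
\int_{S_r}r^{n-2k}\nu^*(\Phi_k) - \int_{S_{r'}}r'^{n-2k}\nu^*(\Phi_k)
= \int_{A_{r,r'}}(n-2k)!\,L_k*1 + \int_{A_{r,r'}}O(r^{-(k+1)\tau-2k}),
\end{equation}
where $A_{r,r'}$ is the annular region between the two spheres, obtained by applying Stokes' theorem to the $n-1$ form on the region. The first term on the right converges as $r,r'\to\infty$ precisely because $L_k\in L^1(M,g)$, and the second term is controlled by integrating the pointwise bound against the Euclidean volume of the annulus: the volume grows like $r^{n-1}\,dr$, so the error integrand behaves like $r^{-(k+1)\tau-2k+n-1}$, which is integrable at infinity exactly when $\tau>\frac{n-2k}{k+1}$. (For pure well-definedness under the weaker hypothesis $\tau>\frac{n-2k}{2k+1}$ stated in the lemma, one only needs the limit to exist, not the full convergence, so a slightly coarser estimate on the difference between concentric spheres suffices.) This shows the limit defining $m_k^C$ exists and is independent of the particular exhaustion by coordinate spheres within a fixed chart.

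Next I would address frame-independence and chart-independence. The subtle point, already flagged in the text after \eqref{eq2.7}, is that $\Phi_k$ is a form on the sphere bundle, not on $M$ itself, so a priori the pull-back $\nu^*(\Phi_k)$ could depend on the choice of adapted frame $\{e_\alpha\}$ extending $e_n=\nu$. I would argue that once $e_n=\nu$ is fixed, the quantity $\nu^*(\Phi_k)$ is invariant under $SO(n-1)$ rotations of the remaining frame vectors $e_1,\dots,e_{n-1}$ tangent to $S_r$, because the expression is built from $\e^{\alpha_1\cdots\alpha_{n-1}}$ contracted against curvature two-forms $\Omega_{\alpha\beta}$ and connection one-forms $\om_{\alpha n}$ in a fully antisymmetrized fashion, and this $\e$-contraction renders the result a scalar multiple of the induced volume form, independent of the tangential frame. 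The dependence on $e_n$ itself is then pinned down by the geometric prescription $e_n=\nu$, the outward unit normal, which is canonical. For chart-independence, the cleanest route is to invoke the bridge to $m_{GBC}$ established in the following section: since Lemma \ref{lem1} together with the GBC expansion \eqref{eq1.3} yields $m_k^C=m_{GBC}$, and $m_{GBC}$ is already proven coordinate-independent in Definition \ref{GBC defi}, the geometric invariance of $m_k^C$ follows immediately.

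The main obstacle I anticipate is bookkeeping the decay orders carefully enough to justify that the error term in \eqref{plan1} integrates to zero over the annulus under the stated hypothesis, and separately verifying the tangential frame-invariance of $\nu^*(\Phi_k)$ rigorously rather than by appeal to symmetry heuristics. The first is a routine but delicate estimate that hinges on matching the exponent $-(k+1)\tau-2k+n-1<-1$ against $\tau>\frac{n-2k}{k+1}$; the second requires either a direct computation showing the $SO(n-1)$-action leaves the contracted form fixed, or the observation that $\e^{\alpha_1\cdots\alpha_{n-1}}\om_{\alpha_{2k+1}n}\wedge\cdots$ transforms as the determinant of the rotation times the original expression, and $\det\in SO(n-1)$ equals one. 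I expect the frame-invariance verification to be the genuinely conceptual step, whereas the decay estimate, while essential, is mechanical.
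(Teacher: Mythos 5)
Your proposal is correct and is essentially the paper's own argument: the paper likewise points to Lemma \ref{lem1} together with Bartnik-type methods for convergence, and then justifies well-definedness and geometric invariance by the equivalence $m_k^C=m_{GBC}$ proved in Section 5 (a legitimate, non-circular appeal, since the proof of Theorem \ref{thm1} nowhere uses this lemma), which is exactly the route you take. The one caveat is the decay threshold: your annulus estimate genuinely requires $\tau>\frac{n-2k}{k+1}$, so, like the paper, you do not actually establish the weaker hypothesis $\tau>\frac{n-2k}{2k+1}$ appearing in the statement, which is almost certainly a misprint for $\frac{n-2k}{k+1}$.
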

\begin{proof}
 In view of Lemma \ref{lem1}, one can show this lemma by using methods of Bartnik \cite{Bar} as in \cite{GWW,LN}. See also the approach due to Michel \cite{M}. We skip the proof
 here, since
 the result also follows from the equivalence proved below.
\end{proof}

\section{Equivalence of the GBC mass and the  Chern mass}

In this section, we show that the GBC mass introduced in \cite{GWW} is the same as the Chern mass introduced in the previous section.

\begin{thm}
 \label{thm1}
 Let $(M,g)$ be an asymptotically flat  manifold of decay order $\tau>\frac{n-2k}{k+1}$ and with $L_k \in L^1(M,g)$.
 Then we have
 $$m_k^C=m_{GBC}.$$
\end{thm}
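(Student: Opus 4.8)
The plan is to prove $m_k^C = m_{GBC}$ by showing that both are obtained as the boundary limit of the \emph{same} interior integral, exploiting the fact (Lemma~\ref{lem1}) that $r^{n-2k}\Phi_k$ is a ``potential'' for $(n-2k)!\,L_k\ast 1$ up to acceptable error, and that $\om_{\a_1\a_2}\wedge(\ast Q^{\a_1\a_2})$ is another such potential via \eqref{eq1.4}. More precisely, both $L_k\ast 1 = 2\pr_i(P_{(k)}^{ijls}\pr_s g_{jl})\ast 1 + O(r^{-(k+1)\tau-2k})$ (equation~\eqref{eq1.3}) and $(n-2k)!^{-1} d(r^{n-2k}\Phi_k)$ represent $L_k\ast 1$ modulo integrable remainders. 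Since the flux defining $m_{GBC}$ is $\int_{S_r} P_{(k)}^{ijls}\pr_s g_{jl}\,\nu_i^{\delta}\,d\sigma^\delta$, which by Stokes equals $\tfrac12\int_{\{r_0\le |x|\le r\}} L_k\ast 1$ up to boundary terms at $r_0$ and lower-order pieces, the idea is that $\int_{S_r} r^{n-2k}\nu^*(\Phi_k)$ must have the same limit after matching normalizing constants.

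The key steps I would carry out are as follows. First, integrate \eqref{key} over the annular region $A(r_0,r)=\{r_0\le |x|\le r\}$ and apply Stokes' theorem to get
\begin{equation*}
\int_{S_r} r^{n-2k}\nu^*(\Phi_k) - \int_{S_{r_0}} r_0^{\,n-2k}\nu^*(\Phi_k)
= (n-2k)!\int_{A(r_0,r)} L_k\ast 1 + \int_{A(r_0,r)} O(r^{-(k+1)\tau-2k}).
\end{equation*}
The decay hypothesis $\tau>\frac{n-2k}{k+1}$ is exactly what makes the error integrand $o(r^{-n})$ after accounting for the volume growth, so the error term converges to zero as $r_0,r\to\infty$. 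Second, perform the identical Stokes computation for the GBC flux using \eqref{eq1.3}: the same annulus integral of $L_k\ast 1$ appears, now with the constant $2$ from $L_k\ast 1 = 2\,d(\,\cdot\,)$. Third, subtract the two identities; the interior integrals of $L_k\ast 1$ cancel once the constants $(n-2k)!$ and $2$ are reconciled, leaving only boundary fluxes whose difference tends to zero. Fourth, insert the defining normalizations $\frac{1}{2^k(n-1)!\,\om_{n-1}}$ for $m_k^C$ and $c(n,k)=\frac{(n-2k)!}{2^{k-1}(n-1)!\,\om_{n-1}}$ for $m_{GBC}$ and check that $\frac{(n-2k)!}{2^k} = \frac{c(n,k)}{2^{k-1}}\cdot\frac{(n-2k)!}{2}$ balances, forcing $m_k^C = m_{GBC}$.

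I expect the main obstacle to be the bookkeeping of the \emph{boundary terms at the inner sphere} $S_{r_0}$ and the frame-dependence of $\Phi_k$. Because $\Phi_k$ lives on the sphere bundle and depends on the chosen frame (as the paper stresses for $\om_{\a_1\a_2}\wedge(\ast Q^{\a_1\a_2})$), one must verify that the inner boundary contribution is a fixed finite quantity independent of $r$, so that it genuinely cancels in the subtraction rather than contributing a spurious constant to the mass. This is precisely the ``trick used by Herzlich'' alluded to in the Introduction: rather than evaluating each flux absolutely, one compares the two closed-ish forms $r^{n-2k}\Phi_k$ and (a constant multiple of) $\om_{\a_1\a_2}\wedge(\ast Q^{\a_1\a_2})$ on the same region and argues their difference is exact up to integrable error, so only the limiting outer flux survives. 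The delicate point is confirming that the difference of the two potentials is itself $d$ of something decaying fast enough, or equivalently that $\int_{S_r}\big(r^{n-2k}\nu^*\Phi_k - (n-2k)!\,2^{-1}\cdot(\text{GBC integrand})\big)\to 0$, which I would establish by expanding both forms in the asymptotically flat frame using Lemma~\ref{lem0} and matching leading terms, discarding the $O(r^{-(k+1)\tau-2k+1})$ pieces on $S_r$ whose integral over an $(n-1)$-sphere of area $\sim r^{n-1}$ vanishes under the stated decay order.
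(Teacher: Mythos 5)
You have correctly assembled the two ``potentials'' for $L_k\ast 1$ (namely \eqref{eq1.3} and Lemma \ref{lem1}), but your core argument has a genuine gap. With a \emph{fixed} inner radius $r_0$, subtracting the two Stokes identities on $A(r_0,r)$ yields only
\[
\Bigl[\tfrac{1}{(n-2k)!}\int_{S_r} r^{n-2k}\nu^*(\Phi_k) - 2\int_{S_r}\ast a\Bigr]
-\Bigl[\tfrac{1}{(n-2k)!}\int_{S_{r_0}} r_0^{n-2k}\nu^*(\Phi_k) - 2\int_{S_{r_0}}\ast a\Bigr]\longrightarrow 0,
\]
i.e.\ the difference of the two outer fluxes is Cauchy in $r$ and hence converges to \emph{some} constant; nothing forces that constant to be zero. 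The two inner boundary terms are fluxes of structurally different forms and do not cancel against each other, and sending $r_0\to\infty$ to dispose of them is circular, since their difference at radius $r_0$ is exactly the limit you are trying to evaluate. Your proposed repair --- expanding both forms in the AF frame via Lemma \ref{lem0} and matching leading terms pointwise on $S_r$ --- is precisely what the remark immediately following the proof of Theorem \ref{thm1} rules out: $r^{n-2k}\Phi_k$ contains $k$ curvature factors, while $\ast a$ (equivalently $\tfrac12\,\om_{\a_1\a_2}\wedge(\ast Q^{\a_1\a_2})$) contains only $k-1$ curvature factors together with a connection form, so the two integrands are \emph{not} equal up to $o(r^{-(n-1)})$ pointwise; they agree only after integration, in the limit.

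You have also mischaracterized Herzlich's trick, which is exactly the missing ingredient. It is not the claim that the difference of the two potentials is exact up to integrable error; it is a \emph{metric gluing}. For each large $r$ one replaces $g$ by $h_r=\eta_r\, g+(1-\eta_r)\,\d$, which is identically Euclidean inside a ball of radius proportional to $r$, equal to $g$ near $S_r$, and asymptotically flat of order $\tau$ with estimates \emph{uniform in} $r$. Because $h_r$ is flat near the inner boundary of $A(\tfrac r4,r)$, both potential forms vanish identically there, $\ast a(h_r)=\Phi_k(h_r)=0$, so Stokes' theorem on the annulus produces \emph{no} inner boundary term at all; each outer flux (which coincides with the corresponding flux of $g$, since $h_r=g$ near $S_r$) then equals the \emph{same} interior integral $\int_{A(\frac r4,r)}L_k(h_r)\ast 1$ up to an error $O(r^{\,n-(k+1)\tau-2k})=o(1)$, by the uniform estimates and $\tau>\frac{n-2k}{k+1}$. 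Subtracting kills the interior integral exactly (it need not converge itself --- it simply cancels), leaving the two outer fluxes equal up to $o(1)$ with no undetermined constant. Without this gluing step, your argument establishes only that $m_k^C$ and $m_{GBC}$ differ by a well-defined constant, not that this constant vanishes.
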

\begin{proof}
 From the previous section we know that $L_k$ has two different expansions:
 \begin{eqnarray}\label{eq_a1'}
L_k *1&=&  2 \partial_i(P_{(k)}^{ijls}\partial_s g_{jl}) *1+O(r^{-(k+1)\tau-2k})\\
&=&\frac1{(n-2k)!} d(r^{n-2k}\Phi_k)+O(r^{-(k+1)\tau-2k}).\label{eq_a2'}
 \end{eqnarray}
If we define a one-form $a$ by $a={P_{(k)i}}^{jsl}\partial_l g_{js} dx^i$, then \eqref{eq_a1'}  is rewritten as
\begin{align}\label{eq_a3'}
  L_k *1=  2 d(*a)+O(r^{-(k+1)\tau-2k}).
  \end{align}
  It is clear that $2*a= (\om_{\a_1\a_2}\wedge(*Q^{\a_1\a_2})) $. See \eqref{eq2.7}.
The GBC mass $m_{GBC}$ can certainly be defined by using  $\lim_{r\to \infty}\int_{S_r} 2*a$ as well.

Now we apply a trick used
by Herzlich recently in \cite{H}. For any large $r>4$, we consider a modified metric $h$ by
gluing the Euclidean metric $\d$
inside the ball $B_{\frac r4}$  and the original metric $g$ outside the ball
$B_{r}$ as follows: Let $\eta:\R^n\to [0,1]$ be a cut-off function satisfying $\eta=0$ in $B_{\frac 12}$ and $\eta=1$ outside $B_{\frac 34}$. Set
$\eta_r(x)=\eta(rx)$. It is clear that
$\eta_r$ satisfies
$$ r|\n \eta_r|+r^2|\n^2 \eta_r|+ r^3 |\n^3 \eta_r|\le c_0,$$
for some universal constant $c_0>0$, which is independent of $r>>1$. We then define for each
$r>>1$ a metric on the annulus $A=A(\frac 14 r, r)$:
$$
h:=h_r
:=
\eta_r \cdot g
+
(1-\eta_r)\cdot\d.$$
It is clear that $h$ is also an asymptotically flat metric of decay order $\tau$ with uniform estimates independent of $r>>1$.
Hence, for metric $h$, \eqref{eq_a1'}--\eqref{eq_a3'} hold.
Note that $*a(h)=\Phi_k(h)=0$ in $B_{\frac 14 r}$ and $*a(h)=*a(g)$, $\Phi_k(h)=\Phi_k(g)$ outside $B_{\frac 34 r}$.
Hence we infer from (\ref{eq_a2'}) and (\ref{eq_a3'}) that
\begin{eqnarray*}
 2\int_{S_r} *a(g)=2 \int_{S_r} *a(h)&=&2 \int_{A(\frac 14 r,r)} d(*a) (h)
 \\
 &=&   \int_{A(\frac 14 r,r)} L_k(h)*1 +\int_{A(\frac 14 r,r)}O(r^{-(k+1)\tau-2k})\\
 &=&\int_{A(\frac 14 r,r)}L_k(h)*1 +o(1),
 \end{eqnarray*}
 and
 \begin{eqnarray*}
\frac1{(n-2k)!} \int_{S_r} r^{n-2k} \Phi_k(g)&=&\frac1{(n-2k)!}  \int_{S_r} r^{n-2k} \Phi_k(h)
  =\frac1{(n-2k)!}  \int_{A(\frac 14 r,r)} d (r^{n-2k}\Phi_k(h))
 \\
 &=&   \int_{A(\frac 14 r,r)} L_k(h)*1 +o(1).
 \end{eqnarray*}
 It follows that
 $$  2\int_{S_r} *a(g)=  \frac1{(n-2k)!} \int_{S_r} r^{n-2k} \Phi_k(g)+ o(1),$$
 and hence the conclusion as $r\rightarrow\infty$.
\end{proof}

\begin{rema}  To prove that $m_{GBC}=m_k^C$, one may would like to show that $*a=\frac1{2\cdot(n-2k)!}\cdot r^{n-2k}\Phi_k+o(r^{-(n-1)})$ pointwisely.
However, this  should not be true, since
$r^{n-2k}\Phi_k$ consists of $k$ times curvature tensors, while $*a$ has only $k-1$ times curvature tensors together with a connection form.
\end{rema}

\section{Equivalence between $m_I^k$ and $m^C_k$}

Now we want to see what exactly $r^{n-2k}\nu^*(\Phi_k)$ is. Notice that  the induced area element of $S_r$ from the volume form of $g$ is
\begin{equation}\label{eq6.0} d\sigma^g=(-1)^{n-1}\om_1\wedge\cdots \wedge \om_{n-1}.\end{equation}

We first consider the $k=1$ case. By Lemma \ref{lem0},
\begin{eqnarray*}
r^{n-2}\nu^*(\Phi_1) &=&r^{n-2}
\e^{\a_1\a_2 \cdots \a_{n-1}}  \Omega_{\a_1\a_2} \wedge   \om_{\a_{3}n}\wedge \cdots \wedge \om_{\a_{n-1}n}\\
 &=& (-1)^{n-1} r\e^{\a_1\a_2 \cdots \a_{n-1}}  \Omega_{\a_1\a_2} \wedge   \om_{\a_{3}}\wedge \cdots \wedge \om_{\a_{n-1}}+O(r^{-1-2\tau}).
 \end{eqnarray*}

 One can check directly that (or one can refer to the following Lemma 6.2 for a proof of the general $k$ case)
 \begin{equation*}
r^{n-2}\nu^*( \Phi_1)=-2(-1)^{n-1}\cdot(n-3)!\,(\Ric-\frac R 2 g)(r\frac \p {\p r} ,\frac \p {\p r})\om_1\wedge\om_2\wedge\cdots\wedge\om_{n-1}+ O(r^{-1-2\tau}),
 \end{equation*}
 which implies that
  \begin{equation}\label{eq_6.1}
 r^{n-2}\nu^*(\Phi_1)=-2\cdot(n-3)!\,(\Ric-\frac R 2 g)(r\frac \p {\p r} ,\frac \p {\p r}) d\sigma^g+ O(r^{-1-2\tau}).
 \end{equation}
 This means that the integrands in the definition of $m^C_1$ and of  $m_I$ are in fact the same, up to a term of order $O(r^{-1-2\tau})$ which vanishes after integration at infinity. Therefore,
 $m^C_1$ and $m_I$ are trivially the same.
Therefore,
Theorem \ref{thm1} implies the equivalence of the ADM mass $m_{ADM}$ and the intrinsically defined mass $m_{I}$.  Indeed this fact holds for the general $k$.

\begin{thm}
Under the conditions in Definition \ref{GBC defi}, we have
$$m_I^k=m_k^C=m_{GBC}.$$
\end{thm}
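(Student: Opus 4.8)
The plan is to notice that Theorem~\ref{thm1} already supplies one half of the statement, namely $m_k^C=m_{GBC}$, so the only thing left to establish is $m_I^k=m_k^C$. This follows at once from the pointwise comparison of integrands recorded as Lemma~\ref{lem2} (point (ii) of the Introduction), which generalizes \eqref{eq_6.1} to all $k$:
\begin{equation*}
r^{n-2k}\nu^*(\Phi_k)=-2\,(n-2k-1)!\,\mathcal{E}^{(k)}\Big(r\frac{\p}{\p r},\frac{\p}{\p r}\Big)\,d\sigma^g+O(r^{-(k+1)\tau-2k+1}).
\end{equation*}
Granting this identity, I would integrate over $S_r$ and compare with the definitions: the normalizing constant of $m_k^C$ is $\tfrac{1}{2^k(n-1)!\om_{n-1}}$ and that of $m_I^k$ is $-\tfrac{(n-2k-1)!}{2^{k-1}(n-1)!\om_{n-1}}$, and their ratio is exactly the factor $-2(n-2k-1)!$ produced by the identity; the replacement of $\nu$ by $\tfrac{\p}{\p r}$ in $\mathcal{E}^{(k)}(X,\nu)$ changes the integrand only by $O(r^{-\tau})$, hence is invisible in the limit. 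Combining $m_I^k=m_k^C$ with Theorem~\ref{thm1} yields $m_I^k=m_k^C=m_{GBC}$.

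The substance is therefore the proof of Lemma~\ref{lem2}, which I would carry out starting from the definition \eqref{eq3.1}. There are precisely $n-1-2k$ connection factors $\om_{\a_j n}$, and Lemma~\ref{lem0} replaces each by $-\tfrac1r\om_{\a_j}+O(r^{-1-\tau})$. Collecting the resulting powers of $r$ together with the sign $(-1)^{n-1-2k}=(-1)^{n-1}$ and multiplying by $r^{n-2k}$ (so that $r^{n-2k-(n-1-2k)}=r$), the leading part of $r^{n-2k}\nu^*(\Phi_k)$ becomes
\begin{equation*}
(-1)^{n-1}\,r\,\e^{\a_1\cdots\a_{n-1}}\,\Omega_{\a_1\a_2}\wedge\cdots\wedge\Omega_{\a_{2k-1}\a_{2k}}\wedge\om_{\a_{2k+1}}\wedge\cdots\wedge\om_{\a_{n-1}}.
\end{equation*}
I would then expand $\Omega_{\a\b}=\tfrac12 R_{\a\b}{}^{\b_1\b_2}\om_{\b_1}\wedge\om_{\b_2}$, note that on $S_r$ only the tangential coframe survives (since $\nu^*\om_n=O(r^{-\tau})$), and rewrite the top-degree wedge as $\e_{\b_1\cdots\b_{2k}\a_{2k+1}\cdots\a_{n-1}}\,\om_1\wedge\cdots\wedge\om_{n-1}$, using $\om_1\wedge\cdots\wedge\om_{n-1}=(-1)^{n-1}d\sigma^g$ from \eqref{eq6.0} so that the two factors $(-1)^{n-1}$ cancel.

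The algebraic heart of the argument—and the step I expect to be the main obstacle—is identifying the resulting scalar with the Lovelock component $\mathcal{E}^{(k)n}{}_n$. Contracting the two permutation symbols over the $n-1-2k$ shared indices gives
\begin{equation*}
\e^{\a_1\cdots\a_{2k}\a_{2k+1}\cdots\a_{n-1}}\,\e_{\b_1\cdots\b_{2k}\a_{2k+1}\cdots\a_{n-1}}=(n-2k-1)!\,\d^{\a_1\cdots\a_{2k}}_{\b_1\cdots\b_{2k}},
\end{equation*}
so the integrand becomes $\tfrac{r(n-2k-1)!}{2^k}\d^{\a_1\cdots\a_{2k}}_{\b_1\cdots\b_{2k}}R_{\a_1\a_2}{}^{\b_1\b_2}\cdots R_{\a_{2k-1}\a_{2k}}{}^{\b_{2k-1}\b_{2k}}\,d\sigma^g$, all indices now running over $1,\dots,n-1$. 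The delicate point is the reduction identity $\d^{n\a_1\cdots\a_{2k}}_{n\b_1\cdots\b_{2k}}=\d^{\a_1\cdots\a_{2k}}_{\b_1\cdots\b_{2k}}$: fixing a matched normal index $n$ in the $(2k+1)$-fold generalized delta defining $\mathcal{E}^{(k)n}{}_n$ in \eqref{2} drops it to exactly the $2k$-fold delta on the form side, giving $\d^{\a_1\cdots\a_{2k}}_{\b_1\cdots\b_{2k}}R\cdots R=-2^{k+1}\mathcal{E}^{(k)n}{}_n$. After substitution the constants collapse to $-2r(n-2k-1)!\,\mathcal{E}^{(k)n}{}_n\,d\sigma^g$, and finally $\mathcal{E}^{(k)}(r\tfrac{\p}{\p r},\tfrac{\p}{\p r})=r\,\mathcal{E}^{(k)n}{}_n+O(r^{-(k+1)\tau-2k+1})$ since $\tfrac{\p}{\p r}=e_n+O(r^{-\tau})$ and $\mathcal{E}^{(k)}=O(r^{-k\tau-2k})$; this is Lemma~\ref{lem2}.

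The closing bookkeeping is routine. The error term $O(r^{-(k+1)\tau-2k+1})$, integrated against an area element of size $O(r^{n-1})$, contributes $O(r^{n-(k+1)\tau-2k})$, which tends to $0$ precisely under the hypothesis $\tau>\frac{n-2k}{k+1}$; the same decay count disposes of the $O(r^{-\tau})$ discrepancy between $\nu$ and $\tfrac{\p}{\p r}$ in the definition of $m_I^k$. Hence $m_I^k=m_k^C$, and with Theorem~\ref{thm1} the chain $m_I^k=m_k^C=m_{GBC}$ is complete.
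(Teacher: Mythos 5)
Your proposal is correct and follows essentially the same route as the paper: invoke Theorem \ref{thm1} for $m_k^C=m_{GBC}$, then prove Lemma \ref{lem2} by substituting Lemma \ref{lem0} into the definition \eqref{eq3.1}, contracting the permutation symbols to produce $(n-2k-1)!\,\d^{\a_1\cdots\a_{2k}}_{\b_1\cdots\b_{2k}}$, identifying the result with the Lovelock component as in \eqref{eq6.4}, and closing with the decay count $(k+1)\tau+2k-1>n-1$. Your explicit justification of the reduction $\d^{n\a_1\cdots\a_{2k}}_{n\b_1\cdots\b_{2k}}=\d^{\a_1\cdots\a_{2k}}_{\b_1\cdots\b_{2k}}$ merely spells out a step the paper leaves implicit in \eqref{eq6.4}, so there is no substantive difference.
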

As the case $k=1$, the Theorem follows immediately from Theorem \ref{thm1} and the following Lemma.
\begin{lem}\label{lem2}
 \begin{equation}\label{eq_6.2}
  r^{n-2k}\nu^*(\Phi_k)=-2\cdot(n-2k-1)!\, {\mathcal E}^{(k)}\left(r\frac \p {\p r} ,\frac \p {\p r}\right)d\sigma^g+ O(r^{-(k+1)\tau-2k+1}).
 \end{equation}
\end{lem}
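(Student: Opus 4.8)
The plan is to compute the pulled-back form $\nu^*(\Phi_k)$ explicitly by feeding in the decay estimate of Lemma \ref{lem0}, and then to recognize the resulting curvature contraction as the $(n,n)$-component of the Lovelock tensor ${\mathcal E}^{(k)}$. Throughout I would work in an orthonormal coframe $\{\om_\a\}$ adapted to $S_r$, so that $e_n=\nu=\frac{\p}{\p r}+O(r^{-\tau})$. The basic simplification is that restricting to $S_r$ annihilates $\om_n$ (since $\om_n=dr+O(r^{-\tau})$ and $dr|_{S_r}=0$); hence, up to terms of order $O(r^{-\tau})$, every curvature two-form may be taken to be $\Omega_{\a\b}=\tfrac12 {R_{\a\b}}^{\gamma\delta}\om_\gamma\wedge\om_\delta$ with $\gamma,\delta\le n-1$.

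First I would insert $\om_{\a_j n}=-\tfrac1r\om_{\a_j}+O(r^{-1-\tau})$ (Lemma \ref{lem0}) into the $n-2k-1$ connection factors of $\Phi_k$. Each leading factor contributes $-\tfrac1r$, so together with the prefactor $r^{n-2k}$ one obtains
\begin{equation*}
r^{n-2k}\nu^*(\Phi_k)=(-1)^{n-2k-1}r\,\e^{\a_1\cdots\a_{n-1}}\Omega_{\a_1\a_2}\wedge\cdots\wedge\Omega_{\a_{2k-1}\a_{2k}}\wedge\om_{\a_{2k+1}}\wedge\cdots\wedge\om_{\a_{n-1}}+O(r^{-(k+1)\tau-2k+1}).
\end{equation*}
The error gathers all terms in which at least one connection factor is replaced by its $O(r^{-1-\tau})$ remainder: such a replacement, against the $k$ surviving curvature two-forms (each $O(r^{-\tau-2})$) and the remaining $n-2k-2$ factors of order $r^{-1}$, lowers the order to exactly $O(r^{-(k+1)\tau-2k+1})$, which I would verify by a direct power count.

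Next I would expand the curvature two-forms and collapse the wedge product to the area form. Writing $\Omega_{\a_{2i-1}\a_{2i}}=\tfrac12 {R_{\a_{2i-1}\a_{2i}}}^{\b_{2i-1}\b_{2i}}\om_{\b_{2i-1}}\wedge\om_{\b_{2i}}$ and using $\om_{c_1}\wedge\cdots\wedge\om_{c_{n-1}}=\e^{c_1\cdots c_{n-1}}\om_1\wedge\cdots\wedge\om_{n-1}$, the leading term becomes a constant multiple of
\begin{equation*}
\e^{\a_1\cdots\a_{2k}\a_{2k+1}\cdots\a_{n-1}}\,\e^{\b_1\cdots\b_{2k}\a_{2k+1}\cdots\a_{n-1}}\,{R_{\a_1\a_2}}^{\b_1\b_2}\cdots {R_{\a_{2k-1}\a_{2k}}}^{\b_{2k-1}\b_{2k}}\,\om_1\wedge\cdots\wedge\om_{n-1}.
\end{equation*}
The key algebraic step is contracting the two Levi-Civita symbols over the $n-1-2k$ shared indices $\a_{2k+1},\dots,\a_{n-1}$, which yields $(n-2k-1)!\,\d^{\a_1\cdots\a_{2k}}_{\b_1\cdots\b_{2k}}$, the generalized Kronecker delta on indices in $\{1,\dots,n-1\}$.

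Finally I would identify the scalar with ${\mathcal E}^{(k)}$. The cofactor expansion gives $\d^{ni_1\cdots i_{2k}}_{nj_1\cdots j_{2k}}=\d^{i_1\cdots i_{2k}}_{j_1\cdots j_{2k}}$, since the fixed leading index $n$ forces the rest into $\{1,\dots,n-1\}$; comparing with \eqref{2} then shows $\d^{\a_1\cdots\a_{2k}}_{\b_1\cdots\b_{2k}}{R_{\a_1\a_2}}^{\b_1\b_2}\cdots {R_{\a_{2k-1}\a_{2k}}}^{\b_{2k-1}\b_{2k}}=-2^{k+1}{\mathcal E}^{(k)n}{}_n$. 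Combining the constants $\tfrac{1}{2^k}$, $(n-2k-1)!$ and $-2^{k+1}$ with $\om_1\wedge\cdots\wedge\om_{n-1}=(-1)^{n-1}d\sigma^g$ from \eqref{eq6.0}, the two sign factors $(-1)^{n-2k-1}$ and $(-1)^{n-1}$ multiply to $1$, leaving $-2(n-2k-1)!\,r\,{\mathcal E}^{(k)}(e_n,e_n)\,d\sigma^g$; replacing $r\,e_n$ by $r\frac{\p}{\p r}$ costs only $O(r^{-\tau})$ relative to ${\mathcal E}^{(k)}=O(r^{-k\tau-2k})$, i.e.\ an error $O(r^{-(k+1)\tau-2k+1})$, giving exactly \eqref{eq_6.2}. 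I expect the main obstacle to be the simultaneous bookkeeping in this last chain: one must track the single power of $r$, both sign factors, and confirm that every discarded remainder is genuinely $O(r^{-(k+1)\tau-2k+1})$. The epsilon contraction and the reduction $\d^{n\cdots}_{n\cdots}=\d^{\cdots}_{\cdots}$ are the conceptual heart, while the remainder is careful accounting; the $k=1$ case displayed before the lemma serves as a useful consistency check on the constants and signs.
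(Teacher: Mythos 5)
Your proposal is correct and takes essentially the same route as the paper's proof: substitute the estimate $\om_{jn}=-\tfrac1r\om_j+O(r^{-1-\tau})$ of Lemma \ref{lem0} into the $n-2k-1$ connection factors, contract the Levi-Civita/Kronecker symbols over the shared tangential indices to produce $(n-2k-1)!\,\d^{\a_1\cdots\a_{2k}}_{\b_1\cdots\b_{2k}}$, and identify that contraction with $-2^{k+1}\mathcal{E}^{(k)n}{}_{n}$ via $\d^{n i_1\cdots i_{2k}}_{n j_1\cdots j_{2k}}=\d^{i_1\cdots i_{2k}}_{j_1\cdots j_{2k}}$, exactly as in \eqref{eq6.3}--\eqref{eq6.4}. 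Your sign bookkeeping ($(-1)^{n-2k-1}\cdot(-1)^{n-1}=1$), constant bookkeeping ($2^{-k}\cdot 2^{k+1}=2$), and error estimates (both for the discarded connection-form remainders and for replacing $e_n$ by $\partial/\partial r$) all match the paper.
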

\begin{proof}
First by the definition of $\Phi_k$, together with Lemma 4.1, we have
\begin{equation}\label{eq6.3}
\begin{array}{rcl}
r^{n-2k}\nu^*(\Phi_k)&=&\ds\vs r^{n-2k}\e^{\a_1\a_2 \cdots \a_{n-1}}  \Omega_{\a_1\a_2} \wedge  \cdots  \wedge
 \Omega_{\a_{2k-1}\a_{2k}} \wedge \om_{\a_{2k+1}n}\wedge \cdots \wedge \om_{\a_{n-1}n}\\
&=& \ds\vs (-1)^{n-1}r\e^{\a_1\a_2 \cdots \a_{n-1}}  \Omega_{\a_1\a_2} \wedge  \cdots  \wedge
 \Omega_{\a_{2k-1}\a_{2k}} \wedge \om_{\a_{2k+1}}\wedge \cdots \wedge \om_{\a_{n-1}}\\
 \vspace{2mm}
 &&+O(r^{-(k+1)\tau-2k+1})\\
 \vspace{2mm}
&=&(-1)^{n-1}\frac{r}{2^k}\e^{\a_1\a_2 \cdots \a_{n-1}}
 {R_{\a_1\a_2}}^{\b_1\b_2} \cdots {R_{\a_{2k-1}\a_{2k}}}^{\b_{2k-1}\b_{2k}} \om_{\b_1}\wedge\cdots  \wedge \om_{\b_{2k}}\\
 \vspace{2mm}
 &&\wedge
  \om_{\a_{2k+1}}\wedge \cdots \wedge \om_{\a_{n-1}}+O(r^{-(k+1)\tau-2k+1})\\
  \vspace{2mm}
&=&\ds\vs (-1)^{n-1} \frac{r}{2^k}\delta^{\a_1\a_2 \cdots \a_{n-1}}_{12\cdots{n-1}}
 {R_{\a_1\a_2}}^{\b_1\b_2} \cdots {R_{\a_{2k-1}\a_{2k}}}^{\b_{2k-1}\b_{2k}}
 \delta^{1\cdots2k~2k+1\cdots n-1}_{\b_1\cdots\b_{2k}\a_{2k+1}\cdots\a_{n-1}} \\
 \vspace{2mm}
 &&\om_{1}\wedge \om_{2}\cdots  \wedge \om_{n-1}\ds\vs+O(r^{-(k+1)\tau-2k+1})\\
  &=&\ds\vs (-1)^{n-1}\frac{(n-2k-1)!}{2^k}\cdot r \delta^{\a_1 \cdots \a_{2k}}_{\b_1\cdots\b_{2k}}{R_{\a_1\a_2}}^{\b_1\b_2} \cdots {R_{\a_{2k-1}\a_{2k}}}^{\b_{2k-1}\b_{2k}}
  \om_{1}\wedge\cdots  \wedge \om_{n-1}\\
  &&+O(r^{-(k+1)\tau-2k+1}),
  \end{array}
  \end{equation}
where in the last equality, we have used the simple relation
$$\delta^{\a_1\a_2 \cdots \a_{n-1}}_{12\cdots{n-1}}
 \delta^{1\cdots2k~2k+1\cdots n-1}_{\b_1\cdots\b_{2k}\a_{2k+1}\cdots\a_{n-1}}
 ={(n-1-2k)!}\, \delta^{\a_1 \cdots \a_{2k}}_{\b_1\cdots\b_{2k}}.$$
On the other hand, noting $e_n=\frac{\partial}{\partial r}+O(r^{-\tau})$, we compute
\begin{equation}\label{eq6.4}- {\mathcal E}^{(k)}\left(r\frac \p {\p r} ,\frac \p {\p r}\right)=\frac{r}{2^{k+1}}\delta^{\a_1 \cdots \a_{2k}}_{\b_1\cdots\b_{2k}}{R_{\a_1\a_2}}^{\b_1\b_2} \cdots
{R_{\a_{2k-1}\a_{2k}}}^{\b_{2k-1}\b_{2k}} +O(r^{-(k+1)\tau-2k+1}).\end{equation}
Finally, equalities \eqref{eq6.0}, \eqref{eq6.3} and \eqref{eq6.4} imply  the conclusion.
\end{proof}

\noindent{\it Proof of Theorem 6.1.}  Note that $(k+1)\tau+2k-1>n-1$ for $\tau>\frac{n-2k}{k+1}$ and $\nu=\frac{\p}{\p r}+O(r^{-\tau})$, the conclusion follows directly from Theorem
5.1 and Lemma 6.2.

\end{document}